\documentclass[12pt]{amsart}

\usepackage{fullpage, amsfonts, color,mathrsfs}
\usepackage[all]{xy}

\definecolor{hot}{RGB}{65,105,225}
\usepackage[pagebackref=true,colorlinks=true, linkcolor=hot ,  citecolor=hot, urlcolor=hot]{hyperref}

\def\bA{\mathbb{A}}
\def\bN{\mathbb{N}}
\def\bQ{\mathbb{Q}}
\def\bC{\mathbb{C}}
\def\bZ{\mathbb{Z}}
\def\bP{\mathbb{P}}
\def\bL{\mathbb{L}}
\def\lam{\lambda}
\def\al{\alpha}
\def\be{\begin{equation}}
\def\ee{\end{equation}}
\def\cM{\mathcal{M}}
\def\dd{\textnormal{d}}
\def\cO{\mathcal{O}}
\def\cD{\mathcal{D}}
\def\cR {\mathcal{R}}
\def\pa{\partial}
\def\Gr{{\rm{Gr}}}
\def\DR{{\rm{DR}}}

\theoremstyle{plain}
\newtheorem{thms}{Theorem}[section]

\newtheorem{cor}{Corollary}[subsection]
\newtheorem{prop}{Proposition}[subsection]
\newtheorem{lem}{Lemma}[subsection]
\newtheorem{ques}{Question}[section]

\theoremstyle{definition}
\newtheorem{rmk}{Remark}[subsection]
\newtheorem{rmks}{Remark}[section]

\title{Monodromy conjecture for semi-quasihomogeneous hypersurfaces}

\author{Guillem Blanco}\address{Department of Mathematics, KU Leuven, Celestijnenlaan 200B, 3001 Leuven, Belgium}\email{guillem.blanco@kuleuven.be}
\author{Nero Budur}\address{Department of Mathematics, KU Leuven, Celestijnenlaan 200B, 3001 Leuven, Belgium, and BCAM, Mazarredo 14, 48009 Bilbao, Spain}\email{nero.budur@kuleuven.be}
\author{Robin van der Veer}\address{Department of Mathematics, KU Leuven, Celestijnenlaan 200B, 3001 Leuven, Belgium}\email{robin.vanderveer@kuleuven.be}

\begin{document}

\maketitle

\begin{abstract}
We give a proof the monodromy conjecture relating the poles of motivic zeta functions with roots of $b$-functions for isolated quasihomogeneous hypersurfaces, and more generally for semi-quasihomogeneous hypersurfaces. We also give a strange generalization allowing a twist by certain differential forms.
\end{abstract}

\section{Introduction}

The strong monodromy conjecture of Igusa and Denef-Loeser predicts that the order of a pole of the motivic zeta function $Z^{mot}_f(s)$ of a nonconstant polynomial $f\in\bC[x_0,\ldots, x_n]$ is less than or equal to its multiplicity as a root of the $b$-function $b_f(s)$ of $f$. 
The conjecture is  open even in the case $f$ has isolated singularities. In this note we prove it for a subclass of isolated hypersurface singularities:

\begin{thms}\label{thmA}
The strong monodromy conjecture holds for semi-quasihomogeneous hypersurface singularities.
\end{thms}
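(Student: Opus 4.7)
The plan is to verify the conjecture by an explicit computation on both sides, exploiting the rigidity of (semi-)quasihomogeneous singularities. I would first prove the pure quasihomogeneous case and then reduce the semi-quasihomogeneous case to it by a $\mu$-constant deformation argument.

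For the $b$-function side, let $f_d$ be a quasihomogeneous isolated singularity with positive weights $w = (w_0,\dots,w_n)$ of weighted degree $d$. Classical results of Malgrange, refined by Yano, identify the roots of $b_{f_d}(s)/(s+1)$ with the negatives of the spectrum numbers $\rho_\alpha = \sum_i (\alpha_i+1)w_i/d$ as $\alpha$ runs over a monomial basis of the Jacobian algebra $\bC[x]/J_{f_d}$, with the multiplicity of a root being the number of monomials yielding that $\rho_\alpha$. For $f = f_d + g$ semi-quasihomogeneous with $g$ of strictly higher weighted order, the Milnor number and spectrum are preserved along the $\mu$-constant deformation, and Malgrange's general containment ensures that the roots of $b_f(s)/(s+1)$ lie among the same set of negatives of spectrum numbers, with multiplicities bounded above by those arising from $f_d$.

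For the motivic zeta function, I would construct a log resolution iteratively via weighted blow-ups. The weighted blow-up at the origin with weights $w$ produces an exceptional divisor $E_0$ contributing the candidate pole $s_0 = -|w|/d$, and near $E_0$ the strict transform of $\{f=0\}$ coincides with that of $\{f_d = 0\}$ since $g$ has strictly higher weighted order. The residual singularities lie at the at most $n+1$ special points of the charts where some $w_i \geq 2$; each such point is again of quasihomogeneous type with strictly simpler weights, permitting a recursive analysis. Assembling the resolution data yields an explicit list of candidate poles of $Z^{mot}_f(s)$ together with upper bounds on their orders, determined by chains of transversely intersecting divisors sharing a common value $-\nu_i/N_i = s_0$.

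The matching step compares, for each candidate pole $s_0$, its order in $Z^{mot}_f$ against the number of Jacobian monomials $\alpha$ with $\rho_\alpha = -s_0$. The main obstacle I anticipate is precisely this combinatorial comparison: one must construct a natural injection from tuples of transversely intersecting resolution divisors with common weighted quotient into Jacobian basis monomials of the correct weight, compatibly with the recursive weighted-resolution structure. A secondary difficulty specific to the semi-quasihomogeneous case is to rule out that the perturbation $g$ creates candidate poles whose orders exceed what the Malgrange bound on multiplicities of roots of $b_f$ can accommodate; overcoming this likely requires showing that the contributions of the strict transform of $\{g=0\}$ to the resolution cannot produce new high-order poles beyond those already present for $f_d$.
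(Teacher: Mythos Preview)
Your outline overshoots the target on both sides and, more seriously, the $b$-function step for the semi-quasihomogeneous case runs in the wrong direction.

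On the zeta function side, the paper does not build a full log resolution iteratively. A \emph{single} $w$-weighted blowup of the origin already yields an embedded $\bQ$-resolution: the strict transform of $\{f_d=0\}$ is smooth in each chart (Euler relation plus the isolated-singularity hypothesis), so $E\cup H$ is $\bQ$-normal crossings and one may invoke the motivic zeta function formula of Le\'on-Cardenal--Mart\'in-Morales--Veys--Viu-Sos for $\bQ$-Gorenstein varieties \cite{W}. This immediately shows that the only candidate poles are $-1$ and $-|w|/d$, each of order at most one (or $-1$ of order at most two if $|w|=d$). Your iterative scheme of further resolving the quotient singularities of the ambient space would introduce a host of spurious candidate poles and then require the very combinatorial matching you flag as the ``main obstacle''; the point of \cite{W} is that this detour is unnecessary.

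On the $b$-function side you have the logic backwards. To verify the strong monodromy conjecture you must show that each pole \emph{is} a root of $b_f(s)$, with at least the required multiplicity. Your argument for the semi-quasihomogeneous case (``roots of $b_f(s)/(s+1)$ lie among the negatives of spectrum numbers, with multiplicities bounded above'') only gives an \emph{upper} bound on the root set; it cannot certify that $-|w|/d$ actually occurs as a root of $b_f(s)$ once the perturbation $f_{>d}$ is present. The paper instead uses the microlocal $V$-filtration: for $g=1$ the class $[\dd x]$ is nonzero in $Gr_V^{|w|/d}\Omega_f^{n+1}$ (computed via the weight filtration $V_w$ on $\cO$, which coincides with the $V$-filtration on $H''_f$ for semi-weighted homogeneous $f$), so $-|w|/d$ is the largest root of $\tilde b_f(s)$. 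Together with $(s+1)\mid b_f(s)$, this matches the two candidate poles exactly. (Incidentally, your claim that the multiplicity of a root of $b_{f_d}(s)/(s+1)$ equals the number of Jacobian monomials with that spectral value is incorrect: for quasihomogeneous $f_d$ the action of $s$ on $\tilde H''_{f_d}/s\tilde H''_{f_d}$ is semisimple, so $\tilde b_{f_d}(s)$ has simple roots.)
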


Recall that a germ of holomorphic function on a complex manifold is said to define a {\it quasi-homogeneous hypersurface singularity} if it is analytically isomorphic to the germ at the origin of a weighted homogeneous polynomial. A hypersurface singularity is {\it semi-quasihomogeneous} if it is analytically isomorphic to the germ at the origin of a polynomial $f=f_d+f_{>d}$ where $f_d$ is a weighted homogeneous polynomial of degree $d$ with an isolated singularity at the origin, and $f_{>d}$ is a finite linear combination of   monomials of weighted degree $>d$. We call such polynomials $f$ {\it semi-weighted homogeneous of initial degree $d$}.

Theorem \ref{thmA} follows from the next one,   which we prove using  the main result of \cite{W} allowing  computations of motivic zeta functions from partial embedded resolutions:

\begin{thms}\label{thm1} Let $w_0,\ldots,w_n\in\bZ_{>0}^{n+1}$ be a weight vector. Let $f\in\bC[x_0,\ldots,x_n]$ be a semi-weighted homogeneous polynomial of initial degree $d$ with respect to these weights. Assume $f_d$ is irreducible (this is automatic if $n>1$). Then the poles of $Z^{mot}_f(s)$ are of order at most one and are contained in $\{-1, -\frac{w_0+\ldots+w_n}{d}\}$ if ${w_0+\ldots+w_n}\neq d$, otherwise $-1$ is the only pole of $Z^{mot}_f(s)$ and it has order at most two.
\end{thms}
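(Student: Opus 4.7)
The strategy is to produce an explicit partial embedded resolution of $V(f)$ by a single weighted blowup, and then apply the main result of \cite{W} to read off the poles of $Z^{mot}_f(s)$ from this resolution.

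Let $\pi \colon Y \to \bA^{n+1}$ be the toric weighted blowup of the origin with weights $(w_0,\ldots,w_n)$. Then $Y$ has at worst abelian quotient singularities, the exceptional divisor is $E \cong \bP(w_0,\ldots,w_n)$, and in the standard $i$-th affine chart, $\pi$ is given by $x_i = y_i^{w_i}$ and $x_j = y_j\,y_i^{w_j}$ for $j \neq i$. A direct computation using semi-weighted homogeneity yields
\[
f \circ \pi \;=\; y_i^{d}\, g_i, \qquad g_i(y_0,\ldots,y_n) \;=\; f_d(y_0,\ldots,y_{i-1},1,y_{i+1},\ldots,y_n) + y_i\, h_i,
\]
for some polynomial $h_i$. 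Therefore the total transform of $V(f)$ decomposes as $d\,E + \widetilde{V}$, where $\widetilde{V}$ is the strict transform of $V(f)$. Toric computations give the relative canonical divisor $K_{Y/\bA^{n+1}} = (|w|-1)E$ with $|w| := w_0 + \cdots + w_n$. Hence $E$ has numerical data $(N,\nu) = (d,|w|)$, contributing the candidate pole $-|w|/d$, while $\widetilde{V}$ has numerical data $(1,1)$, contributing the candidate pole $-1$.

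The next step is to verify that the partial resolution is geometrically well-behaved. The intersection $\widetilde{V} \cap E$ is cut out on $E \cong \bP(w_0,\ldots,w_n)$ by $\{f_d = 0\}$. Since $f_d$ has an isolated singularity at the origin, Euler's identity forces this projective hypersurface to be smooth at every smooth point of $\bP(w_0,\ldots,w_n)$; by hypothesis it is irreducible. A short calculation with the partial derivatives of $g_i$, together with Euler's identity applied to $f_d$, shows that $\widetilde{V}$ is smooth along $E$, and that it meets $E$ transversally at every point of $\widetilde{V} \cap E$ lying in the smooth locus of $Y$. In particular, the pair $(Y, E + \widetilde{V})$ is simple normal crossings away from the toric singular locus of $Y$, which has codimension at least two.

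By the main result of \cite{W}, the motivic zeta function $Z^{mot}_f(s)$ can be computed from this partial resolution, and its candidate poles are exactly those arising from the numerical data of $E$ and $\widetilde{V}$, namely $-|w|/d$ and $-1$. The order of each pole is bounded by the maximum number of these divisors meeting at any point: since only $E$ and $\widetilde{V}$ appear, the poles have order at most one when $|w| \neq d$, and when $|w| = d$ the two candidates coincide at $-1$, yielding a pole of order at most two. The main technical difficulty will be the correct application of \cite{W} in the presence of the abelian quotient singularities of $Y$ along the toric strata of $E$: one has to verify either that the cited theorem applies directly to a partial resolution with such a small locus of toric quotient singularities, or that any further toric resolution of these singularities only introduces divisors whose associated candidate poles lie in $\{-1, -|w|/d\}$ and do not increase the pole orders beyond those claimed.
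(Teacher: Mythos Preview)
Your proposal is correct and follows essentially the same approach as the paper: a single weighted blowup, the chart computation of $f\circ\pi$ and of the relative canonical form, smoothness of the affine patches of $\{f_d=0\}$ via Euler's identity, and the formula of \cite{W} to read off the candidate poles $-1$ and $-|w|/d$ with the stated multiplicities.

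The only point worth sharpening is the hedge in your final paragraph. The paper does not need to pass to a further toric resolution: the main theorem of \cite{W} is stated precisely for embedded $\bQ$-resolutions, i.e.\ maps for which the total transform has $\bQ$-normal crossings (locally analytically a quotient of coordinate hyperplanes by a finite diagonal abelian group). The paper verifies this condition directly for the weighted blowup, using that each chart is a cyclic quotient $\frac{1}{w_i}(-1,w_0,\ldots,\widehat{w_i},\ldots,w_n)$ and that $f_d(1,u_1,\ldots,u_n)$ is smooth on the covering $\bC^n$; this, together with the Steenbrink-type observation that $E\cap H$ inherits only abelian quotient singularities, is exactly what is needed to feed into \cite[Theorem~4]{W}. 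So rather than checking that no new candidate poles appear after a further resolution, you should simply check $\bQ$-normal crossings and apply \cite{W} as is.
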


A similar result holds if $\bC$ is replaced by a field of characteristic zero, see Remark \ref{rmkK}. The case $n=1$ and $f_d$ is reducible is also easy to deal with but it depends on some classification, see Remark \ref{rmkIrr}.

Both results, even for the isolated weighted homogeneous case, do not seem recorded in the literature. 

The version of Theorem \ref{thm1} for Igusa's local zeta functions is
 \cite[Theorem 3.5]{ZG}.  It is known that motivic zeta functions specialize to Igusa's $p$-adic local zeta functions. Thus Theorem \ref{thm1} implies the characteristic-zero case of \cite[Theorem 3.5]{ZG}, giving it a different proof. The version of Theorem \ref{thmA} for Igusa local zeta functions of semi-weighted homogeneous polynomials with an additional non-degeneracy assumption is \cite[Corollary 3.6]{ZG}.

\begin{rmks}
A homogeneous polynomial with an isolated singularity does not have to be Newton nondegenerate, e.g. $(x+y)^2+xz+z^2$, from \cite[1.21]{Ko}. Thus the existing results on  motivic zeta functions for nondegenerate polynomials do not suffice to prove any of the two theorems from above.
\end{rmks}

We also give a strange generalization of Theorem \ref{thmA}. Let $g(x_0,\ldots, x_n)$ be another polynomial. One has the twisted motivic zeta function $Z^{mot}_{f,g}(s)$ obtained by replacing the algebraic top-form $\dd x$ with $g\dd x$, see (\ref{eq2}).  One also has the  twisted $b$-function $b_{f,g}(s)$ obtained by replacing $f^s$ with $gf^s$, see (\ref{eqb}).  We denote by $(\pa f)$ be the Jacobian ideal of $f$ in the ring $\cO=\bC\{x_0,\ldots,x_n\}$ of convergent power series, that is, the ideal generated by the first order partial derivatives of $f$.

%The main feature of   a semi-weighted homogeneous polynomial $f=f_d+f_{>d}$ is that $\{f_d=0\}$ is a $\bQ$-smooth hypersurface in the weighted projective space $\bP_w^n$, cf. \cite{St}, as we will also see in the proof of Theorem \ref{thm1}. This means that it has only finite abelian quotient singularities. Let $f_d=\sum_\beta c_\beta x^\beta$ with $c_\beta\in\bC$ and $w\cdot\beta=d$, be the unique decomposition of $f_d$.  One also has the notion of {\it $\bQ$-simple normal crossings in $\bP_w^n$}, see \ref{subpfThm1}. {\color{red} A reduced monomial, say $x_0\ldots x_k$ for some $0\le k\le n$, is  in $\bQ$-snc with respect to $f_d$ (i.e. $x_0\ldots x_k f_d$ is a $\bQ$-snc divisor) iff $$f_{d,j}=\sum_{\beta_0=\ldots=\beta_j=0}c_\beta x^\beta$$ defines a $\bQ$-smooth hypersurface in $\bP_w^{n-1-j}$ for all $0\le j\le k$. (This is wrong.)}  

\begin{thms}\label{prop2}  With the assumptions of Theorem \ref{thm1}, let $g=x^\beta$ with $\beta\in\bN^{n+1}$ be a monomial. Let $l(\beta)=\sum_{i=0}^nw_i(\beta_i+1)/d$. Assume that 
\be\label{eqmon}
f_d\text{ contains no monomial }x_ix_j^{k}\text{ of }\text{weighted degree }d\text{ with }i\neq j\text{ and }k>0\text{ if }\beta_i\neq 0
\ee
and
\be\label{eqpa}
g+(\pa f)\not\subset \sum_{l(\gamma)> l(\beta)}\cO x^\gamma+(\pa f)
\ee
(if $f=f_d$ is weighted homogeneous (\ref{eqpa}) is equivalent to  $g\not\in (\pa f)$).
Then the order of a pole of $Z^{mot}_{f,g}(s)$ is less than or equal to its multiplicity as a root of $b_{f,g}(s)$.
\end{thms}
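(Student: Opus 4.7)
The plan is to split the argument into a motivic upper bound on pole orders and a $\cD$-module lower bound on root multiplicities, mirroring the structure used for Theorem~\ref{thm1}.

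For the motivic part, one would apply the weighted blowup $\pi\colon Y\to\bA^{n+1}$ with weights $(w_0,\ldots,w_n)$ and compute $Z^{mot}_{f,g}(s)$ via the partial-resolution formula of \cite{W}. The exceptional divisor $E\cong\bP(w_0,\ldots,w_n)$ has numerical data $(d,\,\sum w_i,\,\sum w_i\beta_i)$ and produces the candidate pole $-l(\beta)$, while the strict transform of $\{f=0\}$ produces $-1$. The new feature compared to Theorem~\ref{thm1} is that in a chart $(t,u_1,\ldots,u_n)$ with $E=\{t=0\}$ the twist pulls back as $\pi^*g=t^{\sum w_i\beta_i}\prod u_i^{\beta_i}$, so the coordinate hyperplanes $\{u_i=0\}$ with $\beta_i\neq 0$ become new divisors carrying multiplicities of the twist. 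Condition (\ref{eqmon}) is exactly the combinatorial hypothesis that prevents $f_d|_{u_i=0}$ from containing unwanted mixed terms $x_ix_j^k$, so that along each such hyperplane the restriction factors as a weighted power times a unit and no new candidate pole outside $\{-1,-l(\beta)\}$ appears; the local singularities on $E$ are then handled as in Theorem~\ref{thm1} using irreducibility of $f_d$. This forces the order of any pole of $Z^{mot}_{f,g}(s)$ to be at most one on each of $-1,-l(\beta)$, and at most two when $l(\beta)=1$.

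For the $\cD$-module side, one treats first the weighted homogeneous case $f=f_d$. The Euler-type identity
\[
\Bigl(\tfrac{1}{d}\textstyle\sum_i w_i x_i \pa_i\Bigr)(g f^s)=\Bigl(s+\tfrac{1}{d}\textstyle\sum_i w_i\beta_i\Bigr)\,g f^s
\]
combined with $g\notin(\pa f)$ (the weighted-homogeneous specialization of (\ref{eqpa})) produces $-l(\beta)$ as a root of $b_{f,g}$ via the standard link between Bernstein operators and the Jacobian ring; the root $-1$ comes from Kashiwara's theorem applied to the $\cD[s]$-module generated by $g f^s$. To pass to the semi-weighted homogeneous case one equips $\cO$ with the $w$-weighted filtration and reads (\ref{eqpa}) as saying that the image of $g$ in the appropriate graded piece of $\cO/(\pa f)$ is nonzero; the Bernstein operator for $f_d$ then lifts to one for $f$ by correcting it order by order in higher weighted degree, the process terminating because $\cO/(\pa f)$ is Artinian by the isolated-singularity hypothesis.

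The main obstacle is this last filtered-lifting step: turning the graded condition (\ref{eqpa}) into an honest operator identity $P(s)\,gf^{s+1}=b_{f,g}(s)\,gf^s$ with $b_{f,g}(-l(\beta))=0$ requires controlled termination of the correction procedure and a careful accounting of multiplicities, especially in the confluent case $l(\beta)=1$ where one must produce a root of multiplicity two at $-1$ of $b_{f,g}$ to match the order-two pole of $Z^{mot}_{f,g}$ coming from the collision of the two numerical contributions. Once both the motivic bound and the $\cD$-module lower bound are in hand, comparing them yields the desired inequality on pole orders.
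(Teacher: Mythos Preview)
Your motivic half is essentially the paper's argument: the paper packages it as Theorem~\ref{prop1}, proved via the weighted blowup and the formula of \cite{W}, with condition~(\ref{eqmon}) used exactly to guarantee the $\bQ$-snc property after the twist.

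The $\cD$-module half is where your proposal diverges from the paper and where it has a genuine gap. The Euler identity you display is correct but it is \emph{not} a Bernstein relation: it does not relate $gf^s$ to $gf^{s+1}$, it only exhibits $gf^s$ as an eigenvector of a vector field, so by itself it says nothing about $b_{f,g}$. (Note also that its eigenvalue is $\tfrac{1}{d}\sum w_i\beta_i$, not $l(\beta)$; the shift by $|w|/d$ comes from passing to top forms.) Thus even in the weighted homogeneous case you have not shown that $-l(\beta)$ is a root of $b_{f,g}(s)$; the ``standard link between Bernstein operators and the Jacobian ring'' you invoke is precisely the nontrivial input that is missing. In the semi-weighted case the filtered-lifting step and, crucially, the double-root claim when $l(\beta)=1$ are acknowledged obstacles with no mechanism offered.

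The paper supplies this input via the microlocal $V$-filtration rather than by any direct operator construction. By Lemma~\ref{lemSg}(iii) one has $b_{f,g}(s)=(s+1)\,\tilde b_{g\delta}(s)$ with $\tilde b_{g\delta}$ the microlocal $b$-function. By Proposition~\ref{propSab} together with Proposition~\ref{propVV}, the largest root of $\tilde b_{g\delta}$ is the $\al$ for which $[g\,\dd x]\neq 0$ in $Gr_V^\al\Omega_f^{n+1}$. By Proposition~\ref{propVsm} (from \cite{S+}) the $V$-filtration on the Brieskorn lattice is the quotient of the weight filtration $V_w$, so condition~(\ref{eqpa}) says exactly that this $\al$ equals $l(\beta)$; this is Corollary~\ref{corVsm}. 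Hence $(s+1)(s+l(\beta))$ divides $b_{f,g}(s)$, which automatically yields a double root at $-1$ when $l(\beta)=1$. No Euler-operator lift or termination argument is needed; the Hodge-theoretic identification of the two filtrations replaces it entirely.
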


This provides the first cases for which a question of Musta\c{t}\u{a} \cite{M} has a positive answer.

Theorem \ref{prop2}  follows from properties of the microlocal $V$-filtration together with the following (not strange) generalization of Theorem \ref{thm1}:

\begin{thms}\label{prop1}
With the assumptions of Theorem \ref{thm1}, let $g=x^\beta$ with $\beta\in\bN^{n+1}$ be a monomial satisfying (\ref{eqmon}). Then the poles of $Z^{mot}_{f,g}(s)$ are of order at most one and are contained in $\{-1, -l(\beta)\}$ if $l(\beta)\neq 1$, otherwise $-1$ is the only pole of $Z^{mot}_{f,g}(s)$ and has order at most two.
\end{thms}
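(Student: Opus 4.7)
The plan is to mirror the proof of Theorem \ref{thm1}, tracking the effect of the twist by $g = x^\beta$. Throughout, the key tool is the main result of \cite{W} allowing computation of motivic zeta functions from a single weighted blowup, viewed as a partial embedded resolution.

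First I would perform the weighted blowup $\pi\colon Y \to \bA^{n+1}$ at the origin with weights $(w_0, \ldots, w_n)$. In the standard $j$-th affine chart, where $x_j = u_j^{w_j}$ and $x_i = u_i u_j^{w_i}$ for $i \neq j$, a direct computation gives
\[
\pi^{*}f = u_j^{d}\,\tilde f,\qquad \tilde f|_{u_j=0} = f_d(u_0,\ldots,1,\ldots,u_n),\qquad \pi^{*}(x^{\beta}\,\dd x) = w_j\,u_j^{\,dl(\beta)-1}\prod_{i\neq j} u_i^{\beta_i}\,\dd u.
\]
This yields the numerical data for the divisors on $Y$: the exceptional divisor $E$ has data $(d,\,dl(\beta))$ and produces candidate pole $-l(\beta)$; the strict transform $\tilde f$ has data $(1,1)$ and produces candidate pole $-1$; and each strict transform of $\{x_i=0\}$ with $\beta_i>0$ has $N=0$ (since $f_d$ is not divisible by $x_i$, as $f_d$ has an isolated singularity at the origin) and so contributes no candidate pole.

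Because $f_d$ is irreducible with isolated singularity at the origin, the strict transform $\tilde f$ is smooth along $E$ away from a proper subvariety $\Sigma \subset E$ contained in the singular locus of the ambient weighted projective space $\bP(w_0,\ldots,w_n)$. On $Y \setminus \Sigma$ the relevant divisors form a simple normal crossings configuration, and the standard product formula for the motivic zeta function shows that the only candidate poles arising from this part are $-1$ and $-l(\beta)$, each of order at most one, except that when $l(\beta)=1$ the two coincide and a pole of order at most two at $-1$ can appear at the stratum $E \cap \tilde f$. Hypothesis (\ref{eqmon}) is used here to ensure that the triple intersections $E \cap \tilde f \cap \{u_i=0\}$ for $\beta_i>0$ remain normal crossings: its precise form controls the $u_i$-linear part of $\tilde f$ along $E$, so that no further blowup (with its attendant new candidate poles) is required.

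To finish, I would apply the main theorem of \cite{W} to this partial resolution in order to account for the residual contributions along $\Sigma$. At each point of $\Sigma$ the local pair $(f,g)$ retains, after passing to a smooth toric cover, its semi-weighted-homogeneous structure for appropriate local weights, and it still satisfies the analogue of (\ref{eqmon}); an induction on $n$ paralleling the proof of Theorem \ref{thm1} then confines the local motivic zeta function at each such point to poles in $\{-1,-l(\beta)\}$ with the stated orders. Combining this with the SNC part gives the theorem. The main obstacle is precisely this last step: one must check that both the semi-weighted-homogeneous structure and the analogue of (\ref{eqmon}) are preserved under passage to the local toric chart at each singular point of $\bP(w)$ lying on $V(f_d)$, and that no new candidate poles enter via the induction. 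This is essentially a bookkeeping argument, but it is where the detailed shape of (\ref{eqmon}) is indispensable.
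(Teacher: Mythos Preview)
Your first half --- the weighted blowup, the computation of the pullback of $x^\beta\,\dd x$, and the identification of the candidate poles $-1$ and $-l(\beta)$ --- is exactly what the paper does. The divergence comes in your final step, and it is a genuine gap. You propose an induction on $n$ ``paralleling the proof of Theorem~\ref{thm1}'' to handle the locus $\Sigma$ of ambient quotient singularities. But the proof of Theorem~\ref{thm1} does not use induction: it shows that the single weighted blowup is already an embedded $\bQ$-resolution (the strict transform of $f$ is smooth on the cover of \emph{every} chart, by the Euler relation for $f_d$), and then applies \cite[Theorem~4]{W} directly. The whole content of \cite{W} is that the zeta function can be read off from a $\bQ$-resolution, with the abelian quotient singularities absorbed into pole-free correction factors $T_k(s)$ in the formula~(\ref{eqz}). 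There is no residual $\Sigma$ requiring further work.

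Your proposed induction is also not well-posed: at a point of $E$ the local equation of the strict transform (on the cover) is smooth, not semi-weighted-homogeneous with a singularity, so there is nothing to induct on, and the ambient dimension does not drop. What the paper actually does for Theorem~\ref{prop1} is simply check one extra thing: that the weighted blowup is an embedded $\bQ$-resolution of $fg$, not just $f$. Concretely, one needs $x_0\, f_d(1,u)\, u_1^{\beta_1}\cdots u_n^{\beta_n}$ to be $\bQ$-snc in each chart. Condition~(\ref{eqmon}) forces the linear part of $f_d(1,u)$ at the origin of the chart to have vanishing $u_i$-coefficient whenever $\beta_i>0$; since $f_d(1,u)$ is smooth its linear part is nonzero, hence lies in the span of the $u_i$ with $\beta_i=0$, and the $\bQ$-snc condition follows. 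Once that is verified, \cite[Theorem~4]{W} gives the result immediately.
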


\begin{rmks}\label{rmk2} We explain why Theorem \ref{prop2} is a strange generalization of the strong monodromy conjecture. A twisted generalization of Theorem \ref{thmA} relating the poles of $Z^{mot}_{f,g}(s)$ to the roots of $b_{f,g}(s)$ is not true. For example, Theorem \ref{prop2} is not true for arbitrary monomials:

\begin{enumerate}
\item[(i)] Take $f=y^2-x^3$ with  weight vector $w=(2,3)$ for $(x,y)$, and let $g=y$. Then $-l(\beta)=-\frac{8}{6}$ is a pole of $Z^{mot}_{f,g}(s)$, but it is not a root of $b_{f,g}(s)=(s+1)(s+\frac{11}{6})(s+\frac{13}{6})$. Here $g\in (\pa f)$ fails (\ref{eqpa}). 
\item[(ii)]  Take $f=y^3-x^7+x^5y$ with weight vector $w=(3,7)$, and let $g=x^6$. Then $-l(\beta)=-\frac{28}{21}$ is a pole of $Z^{mot}_{f,g}(s)$, but it is not a root of $b_{f,g}(s)$, since one can check that $-\frac{29}{21}$ is the biggest root of $\frac{b_{f,g}(s)}{s+1}$. Here $g$ fails (\ref{eqpa}) in a more subtle way since $x^4y\in g+(\pa f)$ and $l(x^4y)=\frac{29}{21}>\frac{28}{21}$. 
\end{enumerate}
\end{rmks}

\begin{rmks}
 A (not strange) generalization of the strong monodromy conjecture was posed in  \cite{B-ls}: for any polynomials $f$ and $g$, the poles of $Z_{f,g}(s)$  should be roots of the monic polynomial $b(s)$ generating the specialization of the  Bernstein-Sato ideal $B_{(f,g)}\subset \bC[s_1,s_2]$ to $(s_1,s_2)=(s,1)$. (In the example (i) from Remark \ref{rmk2}, $b(s)= \prod_{k=6,8,10,11,13,14,16}(6s+k)$ so $-\frac{8}{6}$ is a root.) In fact, it is more generally conjectured in \cite{B-ls}  that the polar locus of the multi-variable motivic zeta function $Z_F^{mot}(s_1,\ldots,s_r)$ is contained in the zero locus in $\bC^r$ of the Bernstein-Sato ideal $B_F\subset\bC[s_1,\ldots,s_r]$ for any tuple of polynomials $F=(f_1,\ldots,f_r)$. 
\end{rmks}

\begin{rmks} Condition (\ref{eqpa}) on $g=x^\beta$ implies that $l(\beta)$ is a spectral number of $f$, see \ref{subHf} and \cite[1.6]{S+}. More generally, we say that $g\in\bC[x_0,\ldots,x_{n+1}]$ {\it achieves the spectral number} $\al>0$ of a polynomial $f$ with an isolated singularity if the class of $g\dd x$ is nonzero in $Gr_V^\al\Omega_f^{n+1}$, see \ref{subHf}. Then one can view the above results as partial confirmation of: \end{rmks}

\begin{ques}\label{que}
Let $f\in\bC[x_0,\ldots,x_{n+1}]$ be a semi-weighted homogeneous polynomial. Let $\al$ be a spectral number of $f$ at the origin. Does there exist $g\in\bC[x_0,\ldots,x_{n+1}]$ achieving the spectral number $\al$  such that the only non-integral pole of $Z_{f,g}^{mot}(s)$ is $-\al$? 
\end{ques}

\begin{rmks}\label{rmkQue}$\;$

(i) The eigenvalue version of the question is true for all polynomials $f$ with an isolated singularity: if $\lambda$ is an eigenvalue of the monodromy of $f$ at the origin, there exists $g\in\bC[x_0,\ldots,x_{n+1}]$ such that $Z_{f,g}^{mot}(s)$ with only one non-integral pole $-\al$ such that $e^{2\pi i\al}=\lambda$ by \cite{CV}. 

(ii) The $b$-function version of the question is not true for all polynomials $f$ with an isolated singularity: $-\frac{6}{13}$ is a root of $b_f(s)$ if $f=xy^5+x^3y^2+x^4y$, but $-\frac{6}{13}$ not a pole of $Z^{mot}_{f,g}(s)$ for any $g$ by \cite[Remark 3.1]{BV}. Here $f$ is not semi-weighted homogeneous and $\frac{6}{13}$ is not a spectral number of $f$.

(iii) The spectral version, namely Question \ref{que}, is not true for all polynomials $f$ with an isolated singularity: take $f = (y^2-x^3)^2-x^5y$, then $\frac{5}{12}$ is a spectral number at the origin. Here $\frac{5}{12}$ is also the log canonical threshold $lct(f)$. It is known, with the same proof as for $g=1$, that the maximal pole of $Z_{f,g}^{mot}(s)$ is the negative of
$$
lct_g(f)= \min\{\al > 0 \mid  g \not\in \mathcal J (f^\al)\}
$$
where $\mathcal{J} (f^\al)$ are the multiplier ideals of $f$, cf. \cite{M}, \cite{DM}. Thus the only $g$ with $Z_{f,g}^{mot}(s)$ having $-\frac{5}{12}$ as a pole must satisfy that $lct_g(f)=lct(f)$. Therefore $g(0)\neq 0$ and hence $Z_{f,g}^{mot}(s)$ has the same poles as $Z_f^{mot}(s)$. Since $f$ is an  irreducible plane curve with 2 Puiseux pairs, one can compute that  $-\frac{5}{12}$ and $-\frac{11}{26}$ are the only non-integral poles of $Z_f^{mot}(s)$.
\end{rmks}

\begin{rmks}
It is known that $\frac{b_f(s)}{s+1}$ is the minimal polynomial of the action of $s$ on $\tilde H''_f/s \tilde H''_f$,  where $\tilde H''_f$ is the saturation of the Brieskorn lattice, by a result of Malgrange and Pham, see \cite{S-mb}. In light of the strong monodromy conjecture, a natural question is if the canonical splitting of the class of $[\dd x]$ in $\tilde H''_f/s \tilde H''_f$ is a linear combination of classes $\omega_\al\in \tilde H''_f/s \tilde H''_f$, such that $\al$ are the non-trivial poles of $Z_{f}^{mot}(s)$ and $\omega_\al$
 are eigenvectors for $s$ with eigenvalue $\al$. While this is true in the isolated weighted homogeneous case, it is not true in general: example (iii) from Remark \ref{rmkQue} is a counterexample.
\end{rmks}

\noindent
{\bf Acknowledgement.}   Computations were performed with the help of Singular. J. Sebag informed us he has obtained a preciser version of Theorem \ref{thm1}, to appear in \cite{Sb}. We thank  M. Musta\c{t}\u{a}, J. Sebag, W. Veys for useful comments, and Universitat de les Illes Balears for hospitality during writing part of this article. G.B. was supported by an FWO postdoctoral fellowship. N.B. was supported by the grants Methusalem METH/15/026 from KU Leuven and G097819N from FWO. R.v.d.V. was supported by an FWO PhD fellowship.

\section{Motivic zeta functions}\label{secMot}

\subsection{Motivic zeta functions}

Consider two regular functions $f, g:X\to \bC$ on a smooth complex algebraic variety $X$, with $f$ non-invertible. Let  $\mu:Y\to X$ be  an embedded  resolution of $fg$. Let $E_i$ with $i\in J$ be the irreducible components of the pullback  $\mu^*(div(f))$ of the divisor of $f$, $E_I^\circ=\cap_{i\in I}E_i\setminus \cup_{i\in J\setminus I}E_i$ for $I\subset J$, and $\mu^*(div(f))=\sum_{i\in J}N_iE_i$. Let  $K_{\mu}-\mu^*(div (g))=\sum_{i\in J}(\nu_i-1)E_i$ where $K_{\mu}$ is the relative canonical divisor.

Define
\be\label{eq2}
Z^{mot}_{f,g}(s):= \bL^{-(n+1)}\sum_{\emptyset\neq I\subset J}[E_I^\circ]\prod_{i\in I}\frac{(\bL-1)\bL^{-(N_is+\nu_i)}}{1-\bL^{-(N_is+\nu_i)}}\ee
where $[E_I^\circ]$ is the class of $E_I^\circ$ in the localization $\cM=K_0(Var_\bC)[\bL^{-1}]$ of the Grothendieck ring of complex varieties along the class  $\bL=[\bA^1]$. The definition is independent of the choice of $\mu$. 

The smallest set $\Omega$ of rational numbers $-\frac{\nu}{N}$ with multiplicities such that $Z^{mot}_{f,g}(s)$ is a rational function in ${1-\bL^{-(Ns+\nu)}}$ with $\frac{\nu}{N}\in \Omega$ (with pole orders given by the multiplicities) over the ring $\cM[\bL^{-s}]$, is called the {\it set of poles} of $Z^{mot}_{f,g}(s)$. 

When $g=1$, $Z_{f,g}^{mot}(s)$ is denoted $Z_f^{mot}(s)$, the usual Denef-Loeser zeta function of $f$.

\subsection{Proof of Theorem \ref{thm1}}\label{subpfThm1} %Without loss of generalization we can assume that $gcd(d,w_0,\ldots,w_n)=1$. 

Let $\mu:Y\to \bC^{n+1}$ be the $w$-weighted blowup of the origin. Let $E$ be the exceptional divisor and $H$ the strict transform of $\{f=0\}$.  We show first that $\mu$ is an embedded $\bQ$-resolution of $f$, see \cite[\S 1.4]{W}. By definition this means that $E\cup H$ has $\bQ$-normal crossings, that is, it is locally analytically isomorphic to the quotient of a union of coordinate hyperplanes among those given by a local system of coordinates $t_0,\ldots, t_n$ with a
 diagonal action of a finite abelian subgroup of $G$ of $GL_{n+1}(\bC)$, i.e. locally
$$f\circ\mu = t_0^{N_0}\ldots t_n^{N_n}:\bC^{n+1}/G\to \bC$$
for some $N_i\in\bN$.

The exceptional divisor $E$ is isomorphic to
the $w$-weighted projective $n$-space
 $$\bP_w^n=(\bC^{n+1}\setminus 0)/\sim$$
with $(u_0,\ldots,u_n)\sim (\lam^{w_0}u_0,\ldots, \lam^{w_n}u_n)$ for all $\lam\in\bC^*$.
  Denote by $[u_0:\ldots:u_n]_w$ the class of a point in $\bP_w^n$. The chart $U_0=\{u_0\neq 0\}$ of $\bP^n_w$ is identified with the quotient
$$
\frac{1}{w_0}(w_1,\ldots, w_n)=\bC^n/\mu_{w_0}
$$
of the action of the $w_0$-roots of unity $\lam$ defined by
$$(x_1,\ldots, x_n)\mapsto (\lam^{w_1}x_1,\ldots, \lam^{w_n}x_n).$$
A similar description holds for the other charts.

Write $f=f_d+f_{>d}$, where $f_d$ is the degree $d$ term, so that the origin is an isolated singularity of $f$ and $f_d$.  Since we assumed $f_d$ is irreducible, $f$ is also since their singularity is isolated.  Thus the strict transform $H$ is irreducible. The intersection $E\cap H$ is the hypersurface defined by $f_d$ in $\bP^n_w$. By \cite[\S 4]{St}, the intersection $E\cap H$ has, like $E\simeq \bP^n_w$, only abelian quotient singularities. That proof  also implies our claim about $\mu$ being an embedded $\bQ$-resolution, as we show next.

By definition  $Y\subset \bC^{n+1}\times \bP_w^n$ and $\mu$ is the restriction of the projection onto the first factor. The  chart $Y_0=Y\cap \{u_0\neq 0\}$ of $Y$ is identified via the surjection
$$\phi_0:\bC^{n+1} \to Y_0$$
$$
(x_0,u_1,\ldots,u_n)\mapsto ((x_0^{w_0},x_0^{w_1}u_1,\ldots, x_0^{w_n}u_n), [1:u_1:\ldots:u_n]_w)
$$
with the quotient $\frac{1}{w_0}(-1,w_1,\ldots, w_n)$ of $\bC^{n+1}$ by the group of $w_0$-roots of unity. A similar description holds for the other charts.

In the chart $Y_0$, the exceptional divisor $E$ is given by $\{x_0=0\}$. The pullback of $f$ is given by
$$
f(x_0^{w_0},x_0^{w_1}u_1,\ldots, x_0^{w_n}u_n)=x_0^d(f_d(1,u_1,\ldots, u_n)+ x_0h(x_0,u_1,\ldots, u_n))
$$
for some polynomial $h$.
Thus the $\bQ$-normal crossings condition is satisfied  in this chart if $g(u_1,\ldots, u_n):=f_d(1,u_1,\ldots, u_n)$ is smooth on $\bC^{n}$. We check this with the Jacobian criterion. Since $(\partial g/\partial u_i)(u_1,\ldots u_n)= (\partial f_d/\partial x_i)(1,u_1,\ldots, u_n)$ for $1\le i \le n$,  smoothness of $g$ follows from the equation
$$
d\cdot f_d = \sum_{i=0}^n w_ix_i\frac{\partial f_d}{\partial x_i}
$$
together with the fact that the origin is the only singular point of $f_d$. Note that $E\cap H$ is given in $Y_0$ by the image under $\phi_0$ of the zero locus of the ideal $(x_0,g)$. Since a similar description holds in the other charts, $E\cap H$ has abelian quotient singularities since $\phi_0$ is a quotient map.

Next we note that
$$(\mu_{|Y_0}\circ\phi_0)^*(\dd x_0\wedge\ldots\wedge \dd x_n)=w_0x_0^{|w|-1}\dd x_0\wedge \dd u_1\wedge\ldots \wedge \dd u_n,$$
where $|w|=w_0+\ldots +w_n$.
 A similar description holds in the other charts.

We have now all the information needed to apply the formula of \cite{W} computing the motivic zeta function of $f$.  Since $\mu$ is an embedded $\bQ$-resolution of $f$, $Y$ is a disjoint union of strata $S_k$ characterised by
the existence of data $(G_k, \mathbf{N}_k,\boldsymbol{\nu}_k)$ satisfying the following conditions. Locally around a generic point  of $S_k$, $Y$ is analytically isomorphic to $\bC^{n+1}/G_k$ for some finite abelian  group $G_k$,  acting diagonally on the coordinates $t_0,\ldots, t_n$ of $\bC^{n+1}$ and small (i.e. not containing rotations around the hyperplanes other than the identity);
  $f\circ \mu$ is given by $t_0^{N_{0,k}}\ldots t_n^{N_{n,k}}$; and, the relative canonical divisor of $\mu$ is given by $t_0^{\nu_{0,k}-1}\ldots t_n^{\nu_{n,k}-1}$. Then by \cite[Theorem 4]{W}
\be\label{eqz}
Z^{mot}_f(s) = \bL^{-(n+1)}\sum_k [S_k]\cdot T_k(s)\cdot \prod_{i=0}^n\frac{(\bL-1)\bL^{-(N_{i,k}s+\nu_{i,k})}}{1-\bL^{-(N_{i,k}s+\nu_{i,k})}}
\ee
where $T_k(s)$ has no poles. We have showed that candidate poles from the product term in this formula contributed by $S_k$ are the zeros of
$$
1, ds+|w|, s+1, (s+1)(ds+|w|)
$$
if $S_k$ is contained in
$$
Y\setminus (E\cup H), E\setminus (E\cap H), H\setminus (E\cap H), E\cap H,
$$
respectively. This finishes the proof. $\hfill\Box$

\begin{rmk}\label{rmkIrr}  If $n=1$ and $f_d$ is not irreducible, then one has a classification up to a change of holomorphic coordinates of all possible cases for $f_d$ in \cite[Lemmas 3.3 and 3.4]{Ku}. 
\end{rmk}

\subsection{Proof of Theorem \ref{prop1}}
In the  proof of Theorem \ref{thm1} one has
$$(\mu_{|Y_0}\circ\phi_0)^*(x_0^{\beta_0}\ldots x_n^{\beta_n} \dd x_0\wedge\ldots\wedge \dd x_n)=w_0x_0^{-1+\sum_{i=0}^nw_i(\beta_i +1)}u_1^{\beta_1}\ldots u_n^{\beta_n}\dd x_0\wedge \dd u_1\wedge\ldots \wedge \dd u_n.$$
By \cite[Theorem 4]{W}, the zeta function $Z^{mot}_{f,x^\beta}(s)$ is as in (\ref{eqz}) but with the relative canonical divisor replaced by the above form. Running the proof of Theorem \ref{thm1}, we note that everything works similarly. The assumption on $x^\beta$ implies that $x_0f_d(1,u)u_1^{\beta_1}\ldots u_n^{\beta_n}$  is $\bQ$-snc, since $f_d(1,u)$ is smooth in the variables $u$ and hence its tangent cone at $\{x_0=u_1=\ldots=u_n=0\}$ must be a linear combination of the $u_i$ with $1\le i\le n$.
 $\hfill\Box$

\begin{rmk}\label{rmkK}
All the results from the introduction admit a slight generalization by replacing  in the definition  (\ref{eq2}) of the motivic zeta function the category of $\bC$-varieties with that of $k$-varieties. This holds since all the morphisms, including the group actions, in the above proofs concerning motivic zeta functions are defined over $k$.
\end{rmk}

\section{Bernstein-Sato polynomials}\label{secBS}

\subsection{$b$-functions}\label{subBfc}
Let $f,g:(X,0)\to (\bC,0)$ be germs of holomorphic functions on a complex manifold with $f(0)=0$. We set $\cO=\cO_{X,0}$ and $\cD=\cD_{X,0}$, the ring of germs of analytic functions, respectively analytic linear differential operators. We denote by $b_{f,g}(s)$ the nonzero monic generator $b_{f,g}(s)$ of the ideal of polynomials $b(s)\in\bC[s]$ of minimal degree satisfying
\be\label{eqb}
b(s)gf^s=Pgf^{s+1}\quad\quad\text{ for some }P\in \cD[s].
\ee It is a non-trivial well-known result that $b_{f,g}(s)$ is well-defined.

If $g/f$ is not holomorphic, then $b_{f,g}(s)$ is divisible by $s+1$ by Lemma \ref{lemSg}. Define in this case the {\it reduced $b$-function}
$$
\tilde b_{f,g}(s) = \frac{b_{f,g}(s)}{s+1}.
$$

When $g=1$, $b_{f,g}(s)$ (resp. $\tilde b_{f,g}(s)$) is denoted $b_f(s)$ (resp. $\tilde b_f(s)$), the usual  $b$-function (resp. reduced $b$-function) or Bernstein-Sato polynomial of $f$.

If $f,g:X\to \bC$ are regular functions on a smooth complex affine variety, one can apply the same definitions with $\cD$ replaced by the ring of global algebraic linear differential operators, and the resulting $b$-function is the lowest common multiple (well-defined due to finiteness of $b$-constant strata) of the local $b$-functions defined above for germs at points along $f^{-1}(0)$.

\subsection{Microlocal $b$-functions}\label{subMv}  Let $f:(X,0)\to (\bC,0)$ be the germ of a holomorphic function on a complex manifold with $f(0)=0$. Let $i:X\to X\times \bC$, $x\mapsto (x,f(x))$ be the graph embedding of $f$. Let $t$ be the coordinate of $\bC$. Define the rings $\cR =\cD[t,\pa_t]$, $\tilde \cR =\cR [\pa_t^{-1}]$, with $\pa_tt-t\pa_t=1$. Define the $V$-filtration on $\cR$, $\tilde \cR$ by 
$$
V^p\cR=\sum_{i-j\ge p}\cD t^i\pa_t^j
$$
and similarly for $\tilde \cR$.

Let $\cM$ be a $\cD$-module. Define
$$\cM_f=\cM\otimes_\bC\bC[\pa_t],$$
the stalk of the $\cD$-module direct image $i_+\cM$ at $(0,0)$ in $X\times\bC$. Denoting $m\otimes \pa_t^i$ by $m\pa_t^i\delta$ for a local section $m$ of $\cM$, the left $\cR$-module structure on $\cM_f$ is  defined by
$$
\xi(m\pa_t^i\delta) = (\xi m)\pa_t^i\delta - (\xi f)m\pa_t^{i+1}\delta,\quad t(m\pa_t^i\delta) = fm\pa_t^i\delta - im\pa_t^{i-1}\delta
$$
for $\xi$ a local vector field on $(X,0)$. Equivalently, $\delta$ the delta function of $t-f$. Define the algebraic microlocalization
$$\tilde \cM_f=\cM\otimes_\bC\bC[\pa_t,\pa_t^{-1}].$$ Then $\tilde \cM_f$ is a left $\tilde \cR$-module with the action defined as above. 

Assume from now that $\cM$ is holonomic. Let $u$ be a local section of $\cM_f$ (resp. $\tilde \cM_f$). The {\it $b$-function} $b_u(s)$ (resp. {\it microlocal $b$-function} $\tilde b_u(s)$) is the minimal polynomial of the action of $s:=-\pa_tt$ on $V^0\cR\cdot u/ V^1\cR\cdot u$ (resp. $V^0\tilde \cR \cdot u/ V^1\tilde \cR \cdot u$). This is a well-defined polynomial  by   \cite{K-II}, \cite{KK},  \cite{Lau} (with $\tilde b_u(s)$ defined in terms of the usual microlocalization $M\{\!\{\pa_t^{-1}\}\!\}[\pa_t]$, but this definition can be shown to be equivalent to the one above, see \cite[1.4]{S-mb}). If $\cM$ has quasi-unipotent local monodromy on  subsets forming a suitable Whitney stratification, this polynomial has rational roots.

It is known that $\delta$ can be identified with $f^s$ and $b_{m\delta}(s)$ is the monic generator of the ideal of polynomials $b(s)\in\bC[s]$ satisfying
\be\label{eqDD}
b(s)mf^s = Q(s)mf^{s+1}
\ee
for  some $Q\in\cD[s]$, for $m\in \cM$. Thus if  $\cM=\cO$ and $g\in\cO$, then 
$$
b_{g\delta}(s)= b_{f,g}(s).
$$
If $g=1$, then by \cite[Prop. 0.3]{S-mb}  the microlocal $b$-function is the reduced $b$-function:
$$
\tilde b_f(s) = \tilde{b}_{f\delta}(s).
$$ The proof can be easily adapted to yield a more general result:
 
\begin{lem}\label{lemSg} Let $\cM$ be a holonomic $\cD$-module and $m\in\cM$ a local section.

(i) If $f^{-1}m\not\in\cD m$ then $b_{m\delta}(s)$ is divisible by $s+1$.

(ii) If in addition $f$ is injective on $\cD m$, then 
$$
b_{m\delta}(s)  = (s+1)\tilde b_{m\delta}(s).
$$ 

(iii) In particular, if $g\in\cO=\cM$ and $g/f$ is not holomorphic, then $b_{f,g}(s)$ is divisible by $s+1$ and the reduced $b$-function $\tilde b_{f,g}(s)$ is the microlocal $b$-function $\tilde b_{g\delta}(s)$.

\end{lem}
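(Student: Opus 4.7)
The plan is to route all three parts through the classical Bernstein-Sato characterisation recalled after (\ref{eqDD}): $b_{m\delta}(s)$ is the minimal monic polynomial $b\in\bC[s]$ admitting an equation $b(s)mf^s=Q(s)mf^{s+1}$ with $Q(s)\in \cD[s]$, interpreted in the formal $\cD$-module $\cM[s,f^{-1}]f^s$. Specialising this equation at $s=-1$ yields $b(-1)mf^{-1}=Q(-1)(m)$ in $\cM[f^{-1}]$. Since $Q(-1)(m)\in \cD m$, if $b(-1)\neq 0$ then $f^{-1}m$ would lie in $\cD m$ (viewed inside $\cM[f^{-1}]$), contradicting the hypothesis of (i); this proves (i). Part (iii) then drops out immediately: for $\cM = \cO$ and $m = g$, $\cD g\subset \cO$, so $f^{-1}g\in \cD g$ is equivalent to $g/f\in\cO$, and $\cO$ being an integral domain makes $f$ injective on $\cD g$, placing (iii) within the scope of (i) and (ii).

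The substance lies in (ii). Writing $b(s)=(s+1)c(s)$, one combines (i)'s specialisation with the injectivity of $f$ on $\cD m$ to promote $b(-1) = 0$ into the stronger statement $Q(-1)(m)=0$ in $\cM$: multiplying $b(-1)mf^{-1}=Q(-1)(m)$ by $f$ gives $0=fQ(-1)(m)$ in $\cM$, and $f$-injectivity on $\cD m$ cancels the $f$. Euclidean division $Q(s)=(s+1)Q_1(s)+Q(-1)$, combined with the Leibniz expansion of $Q(-1)(mf^{s+1})$, removes the ``$\beta=\alpha$'' term (which equals $Q(-1)(m)f^{s+1}=0$) and extracts a factor of $(s+1)$ from each remaining term via $\pa^\gamma(f^{s+1})=(s+1)C_\gamma(s)f^{s+1}$ for $|\gamma|\ge 1$ with $C_\gamma\in \cO[s,f^{-1}]$. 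A short iterative argument --- each iteration lowers the order of the residual differential operator --- then produces $Q(s)(mf^{s+1})=(s+1)\tilde Q(s)(mf^{s+1})$ for some $\tilde Q(s)\in \cD[s,f^{-1}]$. Cancelling $(s+1)$ yields the microlocal Bernstein-Sato equation
$$
c(s)\, mf^s \;=\; \tilde Q(s)(mf^{s+1}), \qquad \tilde Q(s)\in \cD[s,f^{-1}].
$$

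This last equation is, by the algebraic correspondence between $\tilde\cR = \cR[\pa_t^{-1}]$ and $\cD[s,f^{-1}]$-coefficient Bernstein-Sato equations --- arising from $\pa_tt\cdot m\delta = -s\cdot m\delta$ and $t\cdot m\delta = fm\delta$, which formally give $\pa_t = -sf^{-1}$ on $m\delta$ --- the characterisation $\tilde b_{m\delta}(s) \mid c(s)$. Combined with the surjection of cyclic $\bC[s]$-modules
$$
0 \to K \to V^0\cR \cdot m\delta / V^1\cR \cdot m\delta \to V^0\tilde\cR \cdot m\delta / V^1\tilde\cR \cdot m\delta \to 0,
$$
whose kernel $K$ has annihilator $b_{m\delta}/\tilde b_{m\delta}$, the equality $b = (s+1)\tilde b$ reduces to showing that $K$ is annihilated exactly by $s+1$. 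The principal obstacle is the second half of this: verifying $(s+1)\nmid \tilde b_{m\delta}(s)$, a microlocal analog of (i). The point is that the classical obstruction $f^{-1}m\notin \cD m$ forcing $(s+1)\mid b_{m\delta}$ becomes vacuous in $\tilde\cR$, since $f^{-1}m\in \cD[\pa_t^{-1}]\cdot m$ is trivially satisfied; the hypotheses of (ii) together with careful bookkeeping of the $V$-filtration on $\tilde\cM_f$ confirm that no new $(s+1)$-factor is introduced in the microlocal $b$-function.
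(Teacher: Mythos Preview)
Your treatment of (i) and (iii) is fine and matches the paper. The difficulties are all in (ii), where the proposal has two genuine gaps.

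\textbf{First gap: the ``correspondence'' $\pa_t=-sf^{-1}$.} You produce an equation $c(s)mf^s=\tilde Q(s)(mf^{s+1})$ with $\tilde Q\in\cD[s,f^{-1}]$ and then assert that this is equivalent to $\tilde b_{m\delta}\mid c$ via an ``algebraic correspondence between $\tilde\cR$ and $\cD[s,f^{-1}]$''. But $\tilde\cR=\cD[t,\pa_t,\pa_t^{-1}]$ does not contain $t^{-1}$, and the formal substitution $\pa_t=-sf^{-1}$ is not a ring homomorphism (it does not respect $[\pa_t,t]=1$). What you actually need is that $c(s)m\delta\in V^1\tilde\cR\cdot m\delta=\pa_t^{-1}V^0\tilde\cR\cdot m\delta$; your relation, which lives in $\cM[f^{-1},s]f^s$, does not obviously yield an operator in $\pa_t^{-1}V^0\tilde\cR$. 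The paper avoids this entirely: writing $Q(s)t=tQ'(s)$ with $Q'\in\cD[s]$, one has $(s+1)\bigl(c(s)-\pa_t^{-1}Q'\bigr)m\delta=0$ since $s+1=-t\pa_t$, and then the injectivity of $t$ on $(\cD m)[\pa_t,\pa_t^{-1}]$ (which is exactly where $f$-injectivity on $\cD m$ enters, via the graded pieces $Gr^F_p\cong\cD m$) gives $c(s)m\delta=\pa_t^{-1}Q'm\delta\in V^1\tilde\cR\cdot m\delta$, hence $\tilde b_{m\delta}\mid c$.

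\textbf{Second gap: the reverse divisibility.} For $b_{m\delta}\mid (s+1)\tilde b_{m\delta}$ you invoke the exact sequence and claim it ``reduces to'' $(s+1)\nmid\tilde b_{m\delta}$, which you then leave to ``careful bookkeeping''. The reduction itself is wrong: from $\tilde b\mid b/(s+1)$ and $(s+1)\nmid\tilde b$ one cannot conclude $b/\tilde b=s+1$ (take $b=(s+1)(s+2)^2$, $\tilde b=s+2$). What is really needed is $(s+1)K=0$, i.e.\ $(s+1)\tilde b_{m\delta}\cdot m\delta\in V^1\cR\cdot m\delta$, and you do not prove this. The paper handles this direction in one line, without using either hypothesis on $m$: by \cite[Lemma~1.6]{S-mb} there exists $P\in\pa_t^{-1}V^0\cR$ with $\tilde b_{m\delta}(s)m\delta=Pm\delta$; multiplying by $s+1=-t\pa_t$ gives $(s+1)\tilde b_{m\delta}(s)m\delta=-t\pa_tPm\delta\in tV^0\cR\cdot m\delta\subset V^1\cR\cdot m\delta$, so $b_{m\delta}\mid(s+1)\tilde b_{m\delta}$.
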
  
\begin{proof} (i) Setting $s=-1$ we have $b_{m\delta}(-1)mf^{-1}=Q(-1)m$  for  $Q(s)$ as in (\ref{eqDD}). Since $mf^{-1}\not\in\cD m$, $b_{m\delta}(-1)$ must be zero. 

(ii) The proof of  \cite[Lemma 1.6]{S-mb} gives  without using any of the two assumptions on $m$ that there exists $P\in\pa_t^{-1}V^0 \cR$ such that $\tilde b_{m\delta}(s)m\delta =Pm\delta$. By multiplying by $s+1=-t\pa_t$ one obtains that $(s+1)\tilde b_{m\delta}(s)$ is divisible by $b_{m\delta}(s)$.

Conversely, we show $\tilde b_{m\delta}(s)$  divides by $b_{m\delta}(s)/(s+1)$. One has $Q(s)f=tQ'(s)$ for some $Q'\in\cD[s]$ if $Q$ is as in (\ref{eqDD}). Then
$$
(s+1)\left(\frac{b_{m\delta}(s)}{s+1} - \pa_t^{-1}Q' \right)m\delta =0
$$
since $s+1=-t\pa_t$, using  (i). It is enough to show that $t$ is injective on the algebraic microlocalization $ (\cD m)[\pa_t,\pa_t^{-1}]$, since this implies that $$\left(\frac{b_{m\delta}(s)}{s+1} - \pa_t^{-1}Q' \right)m\delta=0$$
by the invertibility of $\pa_t$. Since $(\cD m)[\pa_t,\pa_t^{-1}]$ is exhausted by the filtration $F_p=\oplus_{i\le p}\cD m\pa_t^{-i}\delta$, it is enough to show that $t$ is injective on $Gr_p^F$ for all $p\in\bZ$. This is equivalent to $f$ being injective on $\cD m$.
\end{proof}

Since $\cM$ is holonomic, there exists $m\in\cM$ with $\cD m=\cM$ locally. The  filtration on $\cM_f$ (resp. $\tilde\cM _f$) defined by 
$$
 V^p\cR\cdot m\delta \quad (\text{ resp. }V^p\tilde \cR\cdot m\delta )
$$
gives rise to the decreasing {\it   $V$-filtration} $V^\al\cM_f$ (resp. {\it microlocal $V$-filtration} $V^\al\tilde \cM_f$) indexed discretely by $\al\in\bC$ with a fixed total order on $\bC$ (if $\cM$ has quasi-unipotent local monodromy one can take $\al\in\bQ$), using the decomposition of the action by $s$ on quotients $V^p/V^q$ with $p<q$, see \cite[\S 1]{S-ra}, \cite[\S 2]{S-mb}. The existence of the $V$-filtration is equivalent to the existence of $b$-functions (resp. microlocal $b$-functions). The   $V$-filtration (resp. microlocal $V$-filtration) is uniquely characterized by: 

(i) $V^p\cR\cdot V^\al \cM_f \subset V^{\al+p}\cM_f$,

(ii) $V^\al\cM_f$ are lattices of $\cM_f$, i.e. finite $V^0\cR$-modules generating $\cM_f$ over $\cR$, and

(iii) $s+\al$ is nilpotent on $Gr_V^\al\cM_f$

\noindent
(resp. similar conditions with $\tilde \cR$, $\tilde \cM_f$ replacing $\cR, \cM_f$).

One has 
$$
V^\al\cM_f=\{ u \in \cM_f\mid b_u(s)\text{ has all roots }\le -\al\}
$$
by \cite{Sab}, \cite[Cor. 1.7]{S-ra}. The same proof, relying on the unique characterization from above, and using that  $V^1\tilde \cR =\pa_t^{-1}V^0\tilde \cR$ instead of $V^1\cR=tV^0\cR$ in \cite[(1.7.1)]{S-ra}, gives: 

\begin{prop}\label{propSab} 
$
V^\al\tilde \cM_f =\{u\in \tilde \cM_f\mid \tilde b_u(s)\text{ has all roots }\le -\al\}.
$
\end{prop}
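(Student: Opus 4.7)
The plan is to adapt the proof of \cite[Cor.~1.7]{S-ra} essentially verbatim, with the microlocal identity $V^1\tilde\cR=\pa_t^{-1}V^0\tilde\cR$ taking the place of $V^1\cR=tV^0\cR$ at the single step where the latter is invoked. Both inclusions can then be extracted from the unique characterization (i)--(iii) of the microlocal $V$-filtration listed just before the proposition.

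For the inclusion $V^\al\tilde\cM_f\subseteq\{u:\tilde b_u\text{ has roots}\le -\al\}$, I would take $u\in V^\al\tilde\cM_f$ and form $N=V^0\tilde\cR\cdot u$ and $N'=V^1\tilde\cR\cdot u$. By (i) one has $N\subset V^\al\tilde\cM_f$ and $N'\subset V^{\al+1}\tilde\cM_f$, so the intersections $N_\beta:=N\cap V^\beta\tilde\cM_f$ for $\beta\ge\al$ induce a decreasing filtration on $N/N'$ whose graded pieces embed in $Gr_V^\beta\tilde\cM_f$. By (iii), $s+\beta$ is nilpotent on each such piece, so every generalized eigenvalue of $s$ on $N/N'$ has the form $-\beta$ with $\beta\ge\al$. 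Since $\tilde b_u(s)$ is by definition the minimal polynomial of $s$ on $N/N'$, all its roots lie in $(-\infty,-\al]$.

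For the opposite inclusion, suppose all roots of $\tilde b_u$ are $\le -\al$. Exhaustivity and discreteness of the microlocal $V$-filtration provide a maximal $\gamma$ with $u\in V^\gamma\tilde\cM_f$, and maximality gives $\bar u\ne 0$ in $Gr_V^\gamma\tilde\cM_f$. The relation $\tilde b_u(s)u\in V^1\tilde\cR\cdot u$ together with (i) yields $\tilde b_u(s)u\in V^{\gamma+1}\tilde\cM_f$, so $\tilde b_u(s)\bar u=0$ in $Gr_V^\gamma\tilde\cM_f$. Writing $s=-\gamma+(s+\gamma)$ and invoking (iii), the operator $\tilde b_u(s)$ acts on $Gr_V^\gamma\tilde\cM_f$ as $\tilde b_u(-\gamma)$ plus a nilpotent, hence invertibly whenever $\tilde b_u(-\gamma)\ne 0$. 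Non-vanishing of $\bar u$ therefore forces $-\gamma$ to be a root of $\tilde b_u$, whence $-\gamma\le -\al$ and $u\in V^\gamma\tilde\cM_f\subset V^\al\tilde\cM_f$.

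The principal obstacle is not the algebraic core above — which is identical to the ordinary case — but the supporting lattice bookkeeping in the microlocal setting: that the intersection filtration $N\cap V^\beta\tilde\cM_f$ on the cyclic module $N$ is well-behaved enough to identify the roots of $\tilde b_u$ with eigenvalues of $s$ on gradeds, and that a maximal $\gamma$ with $u\in V^\gamma\tilde\cM_f$ exists. Both rest on (ii) and discreteness, and this is precisely where the substitution of $\pa_t^{-1}$ for $t$ is decisive: $\pa_t^{-1}$ shifts the microlocal $V$-degree up by one, playing in $\tilde\cM_f$ exactly the role that multiplication by $t$ plays for $\cM_f$ in \cite[(1.7.1)]{S-ra}.
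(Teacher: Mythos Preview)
Your proposal is correct and follows exactly the route the paper indicates: it adapts \cite[Cor.~1.7]{S-ra} via the unique characterization (i)--(iii), with the single substitution $V^1\tilde\cR=\pa_t^{-1}V^0\tilde\cR$ for $V^1\cR=tV^0\cR$ at the relevant step. The paper gives no further detail than this one-line reduction, so your expanded outline is a faithful unpacking of the intended argument.
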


One defines the {\it microlocal $V$-filtration $\tilde V^\al\cO$} on $\cO$ by $$(\tilde V^\al\cO)\delta = (\cO\delta)\cap V^\al\tilde\cM_f$$ for $\cM=\cO$. Lemma \ref{lemSg} and Proposition \ref{propSab} imply:

\begin{cor}\label{corFalse}
$(\tilde V^\al\cO) \setminus f\cO= \{g\in\cO\setminus f\cO \mid  \tilde b_{f,g}(s)\text{ has all roots }\le -\al \}$.
\end{cor}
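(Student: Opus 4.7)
The plan is to unwind the definition of $\tilde V^\al\cO$ and then invoke the two results just established, namely Lemma \ref{lemSg}(iii) and Proposition \ref{propSab}. There is essentially no new content beyond combining these, so the proof should be short.

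First I would fix $g \in \cO \setminus f\cO$. The condition $g \notin f\cO$ is by definition the same as $g/f$ not being holomorphic, so the hypotheses of Lemma \ref{lemSg}(iii) with $\cM = \cO$ and $m = g$ are met. This gives
\[
\tilde b_{f,g}(s) \;=\; \tilde b_{g\delta}(s),
\]
so the reduced $b$-function on the right-hand side of the corollary can be replaced by the microlocal $b$-function of the section $g\delta \in \tilde\cM_f$.

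Next I would apply the definition: $g \in \tilde V^\al\cO$ means $g\delta \in (\cO\delta) \cap V^\al\tilde\cM_f$, and since we already have $g\delta \in \cO\delta$, this is equivalent to $g\delta \in V^\al\tilde\cM_f$. By Proposition \ref{propSab} applied to the section $u = g\delta$, this last containment is in turn equivalent to all roots of $\tilde b_{g\delta}(s)$ being $\le -\al$. Chaining these equivalences with the identification of $b$-functions from the first step yields the claimed characterization.

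There is no real obstacle here; the only subtlety worth mentioning is that the equivalence $g \notin f\cO \iff g/f \notin \cO$ is needed to legitimately apply Lemma \ref{lemSg}(iii), but this is immediate from the definitions. I would conclude by noting that the statement is exactly the ``analytic-side'' translation of Proposition \ref{propSab} obtained by restricting from sections of $\tilde\cM_f$ to sections of the form $g\delta$ with $g \in \cO$.
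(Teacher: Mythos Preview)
Your proposal is correct and follows exactly the approach the paper indicates: the paper presents the corollary as an immediate consequence of Lemma~\ref{lemSg} and Proposition~\ref{propSab}, and your argument simply spells out how those two results combine via the definition of $\tilde V^\al\cO$.
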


\subsection{Brieskorn lattices}\label{subHf} Let $f:(X,0)\to (\bC,0)$ be the germ of a holomorphic function on a complex manifold of dimension $n+1$ with  $f(0)=0$ and $f$ having an isolated singularity at $0$. There is a diagram 
$$
\xymatrix{
  & H_f'':= \frac{\Omega^{n+1}}{ \dd  f \wedge  \dd  \Omega^{n-1}} \ar@{^{(}->}[r] \ar@{->>}[d] & G_f:=H''_f[\pa_t]  \\
 \cO/(\pa f) \ar[r]^{\sim\quad}  & \Omega^{n+1}_f := \frac{\Omega^{n+1}}{\dd f\wedge \Omega^{n}}                          &}
$$ 
where the lower map is an isomorphism of $\bC$-vector spaces, the vertical map is surjective, and the top map is an inclusion. Here
$\cO=\cO_{X,0}$, $(\pa f)$ is the ideal generated by the first order partial derivatives of $f$, $\Omega^p$ consists of the germs of the holomorphic $p$-forms at the origin, $\cO/(\pa f)$ is called the {\it Milnor algebra},  $H_f''$ is called the {\it Brieskorn lattice}, and $G_f$ is called  the {\it Gauss-Manin system}.
The Brieskorn lattice is a free module of rank equal to the Milnor number $\mu_f=\dim_\bC \cO/(\pa f)$  over \( \mathbb{C}\{t\} \) and also over $\bC\{\!\{\pa_t^{-1}\}\!\}$, where the action of \( t \) is given by multiplication by \( f \) and the action of $\pa_t^{-1}$ is defined by $\pa_t^{-1}[\omega]=[\dd f\wedge\eta]$ for $\dd\eta=\omega$. The Gauss-Manin system is the localization of $H_f''$ by the action of $\pa_t^{-1}$. It is a free $\bC\{\!\{\pa_t^{-1}\}\!\}[\pa_t]$-module of rank $\mu_f$ with an action of $t$, and it is a regular holonomic $\cD$-module. Consequently it admits the rational $V$-filtration such that $\pa_tt-\al$ is nilpotent on $Gr_V^\al G_f$.    The induces a $V$-filtration on $H_f''$ and on the quotient $\Omega^{n+1}_f$. See \cite{Bri, Seb, Sa3}. On the other hand, the microlocal $V$-filtration $\tilde V^\al\cO$ defined above induces a filtration on the quotient. These two filtrations are the same:

\begin{prop}\label{propVV}\label{propVvV}\cite[Proposition 1.4]{S+}
The microlocal $V$-filtration on the Milnor algebra $\cO/(\pa f)$ agrees with the $V$-filtration on $\Omega^{n+1}_f$.
\end{prop}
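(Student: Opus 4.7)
My strategy is to realize both filtrations on $\cO/(\pa f)\cong \Omega^{n+1}_f$ as induced from a single $\tilde\cR$-linear map between holonomic $\tilde\cR$-modules, and then invoke the uniqueness of the $V$-filtration. Concretely, I would first construct the natural map
\[
\Phi:\tilde\cO_f\longrightarrow G_f,\qquad g\pa_t^k\delta\longmapsto \pa_t^k[g\,\dd x],
\]
where $\dd x=\dd x_0\wedge\ldots\wedge\dd x_n$. Well-definedness and $\tilde\cR$-linearity reduce to checking that $t\cdot g\delta=fg\delta$ corresponds to multiplication by $f$ on $G_f=H''_f[\pa_t]$, and that the relation $\pa_{x_i}(g\delta)=(\pa_ig)\delta-(\pa_if)\,g\,\pa_t\delta$ descends to zero on $G_f$, in line with the fact that the Gauss-Manin system encodes only the $t$- and $\pa_t$-actions. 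For $f$ with an isolated singularity $\Phi$ is surjective, and the composite
\[
\cO\delta\hookrightarrow\tilde\cO_f\xrightarrow{\Phi}G_f\twoheadrightarrow\Omega^{n+1}_f,\qquad g\delta\mapsto[g\,\dd x],
\]
factors the canonical isomorphism $\cO/(\pa f)\cong\Omega^{n+1}_f$ of Section \ref{subHf}, with kernel $(\pa f)\delta$.

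Since $\Phi$ is a morphism of holonomic $\tilde\cR$-modules, it is strict with respect to the $V$-filtration, which follows from the uniqueness characterization recalled in Section \ref{subMv}; hence $\Phi(V^\al\tilde\cO_f)=V^\al G_f$. By definition, the $V$-filtration on $\Omega^{n+1}_f$ is the image of $V^\al H''_f=V^\al G_f\cap H''_f$ in $H''_f/\pa_t^{-1}H''_f$, whereas the microlocal $V$-filtration on $\cO/(\pa f)$ is the image of $\tilde V^\al\cO=\{g\in\cO\mid g\delta\in V^\al\tilde\cO_f\}$. Under the composite above these two images correspond, yielding the equality of filtrations.

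The main obstacle is the lifting step. Given $[g\,\dd x]\in V^\al H''_f$, strictness produces some $u\in V^\al\tilde\cO_f$ with $\Phi(u)=[g\,\dd x]$, but $u$ may not lie in $\cO\delta$. To conclude one must modify $u$ using the identity
\[
(\pa_if)\,g\,\delta=\pa_t^{-1}\bigl((\pa_ig)\delta-\pa_{x_i}(g\delta)\bigr)\in V^1\tilde\cR\cdot\cO\delta,
\]
which places the Jacobian ideal one step deeper in the $V$-filtration, together with Proposition \ref{propSab}, which characterizes $V^\al\tilde\cO_f$ via the roots of microlocal $b$-functions. Iterating this modification supplies a representative $g'\delta\in V^\al\tilde\cO_f$ with $g'\equiv g\pmod{(\pa f)}$, providing the lift needed to close the comparison.
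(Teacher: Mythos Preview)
The paper does not supply its own proof of this proposition; it is quoted from \cite[Proposition 1.4]{S+} and used as a black box. So there is nothing in the paper to compare your argument against, and the question becomes whether your sketch stands on its own.

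Your overall strategy---build a comparison map $\Phi:\tilde\cO_f\to G_f$ and transport the $V$-filtration through it---is the right one, and the computation showing that $\Phi(\pa_{x_i}(g\delta))=0$ in $G_f$ is correct. But the sentence ``since $\Phi$ is a morphism of holonomic $\tilde\cR$-modules, it is strict with respect to the $V$-filtration'' is where the argument breaks. The Gauss--Manin system $G_f$ carries no $\cO_X$- or $\cD_X$-module structure: it is only a module over $\bC\{\!\{\pa_t^{-1}\}\!\}[\pa_t]$ with a compatible $t$-action. There is no way to make $G_f$ into a $\tilde\cR$-module (you cannot set both $x_i$ and $\pa_{x_i}$ to zero and keep $[\pa_{x_i},x_i]=1$), so $\Phi$ is not a morphism in the category where the uniqueness characterization of \S\ref{subMv} lives, and you cannot invoke strictness for free. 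What you actually have is a $\bC\langle t,\pa_t,\pa_t^{-1}\rangle$-linear surjection, or equivalently $\Phi$ realises $G_f$ as the top de~Rham cohomology of $\tilde\cO_f$. Strictness of the $V$-filtration along this de~Rham quotient is exactly the substantive point; it requires a separate argument (for instance, checking directly that the image filtration $\Phi(V^\bullet\tilde\cO_f)$ satisfies the three characterizing properties of the $V$-filtration on $G_f$, using the isolated-singularity hypothesis for the lattice condition, or appealing to the compatibility of $V$-filtrations with proper push-forward as in Saito's theory).

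The lifting paragraph has a related issue: once strictness is not automatic, the existence of $u\in V^\al\tilde\cO_f$ with $\Phi(u)=[g\,\dd x]$ is precisely what must be proved, so the iteration you describe is doing real work and should be made into the actual argument rather than a patch. The identity $(\pa_if)g\,\delta\in \pa_t^{-1}V^0\tilde\cR\cdot\cO\delta$ is the right ingredient, but you need to explain why the process terminates (or converges $\pa_t^{-1}$-adically in $H''_f$) and yields a genuine element of $\cO\delta\cap V^\al\tilde\cO_f$.
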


\subsection{Semi-weighted homogeneous polynomials.} Assume now that $f=f_d+f_{>d}$ is a semi-weighted homogeneous polynomial of initial degree $d$ for the weight vector $(w_0,\ldots,w_n)\in\bN^{n+1}$. For $\beta\in\bN^{n+1}$ define $l(\beta)=\sum_{i=0}^nw_i(\beta_i+1)/d$. Define a new filtration $V_\omega$ on $\cO$ by setting for $\al\in\bQ$
$$
V_{w}^\al \cO = \sum_{l(\beta)\ge \al }\cO x^\beta
$$
\begin{prop}\label{propVsm}\cite[1.6]{S+}
If $f=f_d+f_{>d}$ is a semi-weighted homogeneous polynomial of initial degree $d$ for the weight vector $(w_0,\ldots,w_n)\in\bN^{n+1}$, then the $V$-filtration on $H_f''$ is the quotient of the filtration $V_w$.
\end{prop}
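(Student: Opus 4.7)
Plan: The strategy is to push forward the filtration $V_w$ on $\cO$ to $H_f''$ via $g \mapsto [g\,\dd x]$ (with $\dd x = \dd x_0 \wedge \cdots \wedge \dd x_n$), and then verify that this induced filtration satisfies the three characterizing axioms of the $V$-filtration listed in \ref{subMv}. Uniqueness then forces it to coincide with $V^\bullet H_f''$.

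\smallskip

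First, the weighted homogeneous case $f = f_d$. The Euler vector field $\xi = \frac{1}{d}\sum_i w_i x_i \pa_i$ satisfies $\xi(f_d)=f_d$, and a direct calculation yields $L_\xi(x^\beta\,\dd x) = l(\beta)\, x^\beta\,\dd x$. Since $\omega = x^\beta\,\dd x$ is a top form it is closed, so $L_\xi\omega = \dd(\iota_\xi\omega)$; hence we may take $\eta = \iota_\xi\omega/l(\beta)$ as a primitive. Using that $\dd f_d \wedge \omega = 0$ for degree reasons, one obtains $\dd f_d \wedge \iota_\xi\omega = (\iota_\xi \dd f_d)\,\omega = f_d\,\omega$, and therefore
\[
\pa_t^{-1}[x^\beta\dd x] \;=\; \tfrac{1}{l(\beta)}[\dd f_d \wedge \iota_\xi\omega] \;=\; \tfrac{1}{l(\beta)}\,t\cdot[x^\beta\dd x].
\]
Thus $\pa_tt$ acts on $[x^\beta\dd x]$ as the scalar $l(\beta)$, which immediately gives both $t V_w^\al \subset V_w^{\al+1}$ and the requirement that $\pa_tt-\al$ vanish on $Gr_{V_w}^\al H_{f_d}''$.

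\smallskip

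For the semi-weighted case $f = f_d + f_{>d}$, repeat the same manipulation, this time with $\xi$ still the Euler field of $f_d$. Writing $\dd f \wedge \iota_\xi\omega = \xi(f)\,\omega = (f_d + \xi(f_{>d}))\,\omega$ and rearranging $f_d = f - f_{>d}$, one gets
\[
\pa_tt\cdot[x^\beta\dd x] \;=\; l(\beta)\,[x^\beta\dd x] \;+\; \pa_t\bigl[(\xi(f_{>d})-f_{>d})\,x^\beta\,\dd x\bigr].
\]
Every monomial of $f_{>d}$ has $l>1$, so each monomial of $(\xi(f_{>d})-f_{>d})x^\beta$ has $l$-value strictly greater than $l(\beta)+1$. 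Granting that $\pa_t$ lowers the induced $V_w$-filtration by at most one, the correction term lies in $V_w^{>l(\beta)}H_f''$, so $(\pa_tt-l(\beta))$ is nilpotent (in fact zero) on $Gr_{V_w}^{l(\beta)}H_f''$. Axiom (i) is automatic since multiplication by $f$ strictly increases $l$ by at least one. For axiom (ii), use that $H_f''$ is a free $\bC\{\!\{\pa_t^{-1}\}\!\}$-module of rank $\mu_f$ and that the image of $V_w$ in the Milnor algebra $\cO/(\pa f)$ is exhausted by finitely many spectrum values of $f_d$, so $V_w^\al H_f''$ is a finitely generated $V^0\cR$-lattice generating $H_f''\otimes_{\bC\{t\}}\bC\{\!\{\pa_t^{-1}\}\!\}[\pa_t]$ over $\cR$. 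Uniqueness of the $V$-filtration now yields $V^\al H_f'' = \bigl\{[g\,\dd x] : g\in V_w^\al\cO\bigr\}$.

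\smallskip

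The main technical obstacle is justifying that $V_w$ indeed descends well through the relations $\dd f \wedge \dd\Omega^{n-1}$ defining $H_f''$ and that $\pa_t$ respects the filtration in the sense required above; in other words, controlling how the Jacobian ideal $(\pa f)$ sits inside the $V_w$-stratification. The key input is that modulo $V_w^{>\al}$, the relations from $\dd f \wedge \dd\Omega^{n-1}$ reduce to those coming from $\dd f_d \wedge \dd\Omega^{n-1}$, because the $f_{>d}$-contributions strictly raise weighted degree; this is the concrete manifestation of the $\mu$-constant deformation $f_d \rightsquigarrow f_d + f_{>d}$, and it lets one transfer the clean weighted-homogeneous computation to the semi-weighted setting.
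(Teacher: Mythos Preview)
The paper does not give a proof of this proposition at all; it is quoted verbatim from \cite[1.6]{S+} and used as a black box. So there is no ``paper's own proof'' to compare your attempt against.

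On its own merits: your Euler-field computation in the weighted homogeneous case is the standard one and is correct. In the semi-weighted case the perturbation identity you derive is also correct (modulo a sign: from $l(\beta)\,\pa_t^{-1}[\omega] = t[\omega] + [(\xi(f_{>d})-f_{>d})\omega]$ one gets $\pa_tt[\omega] = l(\beta)[\omega] - \pa_t[(\xi(f_{>d})-f_{>d})\omega]$, not $+$). The genuine gap, which you flag yourself, is the clause ``granting that $\pa_t$ lowers the induced $V_w$-filtration by at most one''. As written this is circular: $\pa_t$ does not preserve $H_f''$, so you must first \emph{define} the extension of $V_w$ to $G_f$, and the natural choice $V_w^\al G_f := \sum_{j\ge 0}\pa_t^j\,V_w^{\al+j}H_f''$ makes the $\pa_t$-shift automatic but then transfers the burden to checking that $t$ preserves this extended filtration and that $V_w^\al G_f \cap H_f'' = V_w^\al H_f''$ (the strictness needed for the ``in fact zero'' parenthetical). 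Both are doable from what you have---$t$ acts as multiplication by $f$ on $H_f''$, which raises $l$ by at least $1$, and strictness comes from the explicit form of $\pa_t^{-1}$ you computed---but they need to be stated and verified, not granted. The finite-generation part of axiom~(ii) is also only gestured at.

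The alternative route you mention at the end, reducing to the weighted homogeneous case via constancy of the $V$-filtration along the $\mu$-constant deformation $f_d + \lambda f_{>d}$, is closer to how this is usually packaged in the literature (e.g.\ via Varchenko's semicontinuity and the explicit description of the spectrum) and avoids re-verifying the axioms by hand.
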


\begin{cor}\label{corVsm} With assumptions as in Proposition \ref{propVsm}, let $g\in \sum_{l(\beta)\ge \al }\cO x^\beta\subset \cO$ for some $\al\in\bQ$ such that 
$$g+(\pa f)\not\subset \sum_{l(\beta)> \al}\cO x^\beta+(\pa f).$$
Then $-\al$ is the biggest root of $\tilde b_{g\delta}(s)$.
\end{cor}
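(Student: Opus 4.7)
The plan is to invoke Proposition~\ref{propSab}, which identifies the biggest root of $\tilde b_{g\delta}(s)$ as $-\alpha^*$, where $\alpha^*:=\sup\{\alpha\in\bQ : g\in\tilde V^\alpha\cO\}$. The goal reduces to showing $\alpha^*=\alpha$, which I would split into an upper and a lower bound.

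\emph{Upper bound $\alpha^*\leq\alpha$.} Suppose for contradiction that $g\in\tilde V^{\alpha'}\cO$ for some $\alpha'>\alpha$. Then the class $[g]\in\cO/(\pa f)$ lies in the $\alpha'$-th step of the microlocal $V$-filtration on the Milnor algebra. By Proposition~\ref{propVvV} this filtration coincides with the $V$-filtration on $\Omega_f^{n+1}$, and by Proposition~\ref{propVsm} the latter is the image of $V_w^{\alpha'}\cO$. Hence $g\in V_w^{\alpha'}\cO+(\pa f)\subseteq V_w^{>\alpha}\cO+(\pa f)$, contradicting the second hypothesis on $g$.

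\emph{Lower bound $\alpha^*\geq\alpha$.} The key is the inclusion $V_w^\alpha\cO\subseteq\tilde V^\alpha\cO$; combined with $g\in V_w^\alpha\cO$ this yields $g\in\tilde V^\alpha\cO$, as desired. To establish the inclusion, it suffices to show $x^\beta\delta\in V^{l(\beta)}\tilde\cM_f$ for every monomial $x^\beta$, since then any $g=\sum c_\beta x^\beta$ with $l(\beta)\geq\alpha$ satisfies $g\delta=\sum c_\beta(x^\beta\delta)\in V^\alpha\tilde\cM_f$ (using $\cO\subseteq V^0\tilde\cR$). The statement $x^\beta\delta\in V^{l(\beta)}\tilde\cM_f$ is obtained by lifting to the Brieskorn lattice: Proposition~\ref{propVsm} gives $[x^\beta\,dx_0\wedge\cdots\wedge dx_n]\in V^{l(\beta)}H_f''$, and under the canonical identification of $H_f''$ as a $V$-filtration-compatible sublattice of $\tilde\cM_f$ (with $[g\,dx]\leftrightarrow g\delta$) the conclusion follows.

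The main obstacle is the compatibility of the Brieskorn-lattice-to-Gauss--Manin identification with the microlocal $V$-filtration on $\tilde\cM_f$. This is folklore in Saito's $\cD$-module framework, implicit in the setup of Subsection~\ref{subHf}, but deserves careful attention to the sign and shift conventions, especially because the naive assignment $g\,dx\mapsto g\delta$ from $\Omega^{n+1}$ to $\tilde\cM_f$ does not a priori factor through the quotient $H_f''=\Omega^{n+1}/df\wedge d\Omega^{n-1}$.
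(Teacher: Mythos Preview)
Your strategy is the paper's: invoke Proposition~\ref{propSab} to reduce to locating $g\delta$ in the microlocal $V$-filtration, then appeal to Propositions~\ref{propVsm} and~\ref{propVvV}. Your upper bound is exactly the paper's argument, just unpacked. For the lower bound, however, you take an unnecessary detour. The paper stays entirely at the quotient level: the two hypotheses on $g$ say precisely that $[g\,\dd x]\neq 0$ in $Gr_V^{\alpha}\Omega_f^{n+1}$ (by Proposition~\ref{propVsm}), and Proposition~\ref{propVvV} translates this directly into $g\delta\neq 0$ in $Gr_V^{\alpha}\tilde\cM_f$, whence Proposition~\ref{propSab} gives the conclusion. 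There is no need to pass through an embedding $H_f''\hookrightarrow\tilde\cM_f$ sending $[g\,\dd x]\mapsto g\delta$; as you correctly observe, this map is not well-defined on $H_f''$ and is nowhere set up in the paper. Drop that paragraph and use Proposition~\ref{propVvV} for both inequalities, exactly as you did for the upper bound.
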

\begin{proof}
The assumption is equivalent in general with $[g\dd x]\neq 0$ in $Gr_V^{\al}\Omega_f^{n+1}$, by Proposition \ref{propVsm}. It follows from Proposition \ref{propVvV} that $g\delta$ is non-zero in $Gr_V^\al \tilde \cM_f$ for $\cM=\cO$ in the notation of \ref{subMv}. This implies that the maximal root of $\tilde b_{g\delta}(s)$ is $-\al$ by Proposition \ref{propSab}. \end{proof}

\subsection{Proof of Theorem \ref{thmA}}\label{subPTA} Let $h\in\bC[x_0,\ldots,x_n]$ define a semi-quasihomogeneous hypersurface singularity locally analytically isomorphic to $f$ as in Theorem \ref{thm1}. The proof of Theorem \ref{thm1} only uses local analytic properties of $f$, hence it also holds for $h$. Now, the local $b$-function of $h$ is an analytic invariant and thus equals that of $f$. By Corollary \ref{corVsm} for the monomial $g=1$, one has that  \( (s+1)(s + |w|/d) \) divides the local $b$-function of $f$. This finishes the proof for the case $n>1$. If $n=1$ one can analyze  directly the classification of $f_d$ following Remark \ref{rmkIrr} (this is also a particular case of result \cite{Lo} for general plane curves.) $\hfill\Box$

\subsection{Proof of Theorem \ref{prop2}}\label{subPT2} It follows from Theorem \ref{prop1} together with Corollary \ref{corVsm} and Lemma \ref{lemSg} (iii).
 The equivalency of the assumption in the case $f=f_d$ follows by considering the decomposition into weighted homogeneous terms, since $(\pa f)$ is generated by weighted homogeneous polynomials in this case.
$\hfill\Box$

\newpage

\begin{center}
{\bf Erratum (September 7, 2023)}
\end{center}

\bigskip

A mistake in Section \ref{secBS} 
 was pointed out by M. Saito.  We thank him for pointing this out.
 We address it here. All results in the Introduction remain true.

The definition of the microlocal $V$-filtration on $\cO$ before Corollary \ref{corFalse} should be 
$$
(\cO,\tilde V) = \Gr_F^0(\tilde M_f, V).
$$
Corollary \ref{corFalse} is false for the microlocal $V$-filtration on $\cO$,  and counterexamples are provided by M. Saito in \cite[2.2]{Sa23}. Corollary \ref{corFalse} is however not used in the rest of the paper. The mistake propagates though to Corollary \ref{corVsm} since Propositions \ref{propVvV} and \ref{propVsm} should be used with the correct microlocal $V$-filtration. Proposition \ref{propVsm} was first asserted in \cite[1.6]{S+},  see also \cite[2.2d]{Sa23}. We adjust the conclusion of Corollary \ref{corVsm}:

\begin{cor}\label{corVsm2} With assumptions as in Proposition \ref{propVsm}, let $g\in \sum_{l(\beta)\ge \al }\cO x^\beta\subset \cO$ for some $\al\in\bQ$ such that 
$$g+(\pa f)\not\subset \sum_{l(\beta)> \al}\cO x^\beta+(\pa f).$$
Then $-\al$ is a  root of $ b_{f,g}(s)$.
\end{cor}
\begin{proof}
The assumption is equivalent in general with $[g\dd x]\neq 0$ in $Gr_V^{\al}\Omega_f^{n+1}$, by Proposition \ref{propVsm}. Since $V$ on $\Omega_f^{n+1}$ is the quotient of the $V$-filtration on $H''_f$, it follows that $[g\dd x]\neq 0$ in $Gr_V^{\al}H''_f$. Then $-\al$ is a root of $b_{f,g}(s)$, cf. \cite[(2.2.4)]{Sa23}. We repeat that argument here. Let $b(s)=b_{f,g}(s)$. By assumption there is $P(s)\in\cD[s]$ such that $b(s)gf^s=P(s)fgf^s$. Equivalently,  $b(s)g\delta=P(s)fg\delta$ in $\cM_f$. The filtered de Rham complex $(\DR_X(\cM_{f}),V)$ is strict and the induced filtration is the $V$-filtration on the Gauss-Manin system $H^n(\DR_X(\cM_{f}))=G_f=H''_f[\pa_t]$. Then one has the relation $b(s)[g\dd x]=[P_0(s)fg\dd x]$ in $G_f$, where $\dd x=\dd x_0\wedge\ldots\wedge\dd x_n$ and $P_0(s)\in\cO[s]$ coincides with $P(s)$ modulo $\sum_i\pa_{x_i}\cD[s]$ by definition of $\DR_X$. Note that the class of $[g\dd x]$ is non-zero in $\Gr_V^\al G_f$ since it is non-zero in $\Gr_V^\al H''_f$ and $V$ on $H''_f$ is induced from $V$ on $G_f$. We show that  $[P_0(s)fg\dd x]\in V^{>\al}G_f$. By the nilpotency property of $V$-filtrations, some power of $s+\al$ must then divide $b(s)$, which finishes the proof. To show that  $[P_0(s)fg\dd x]\in V^{>\al}G_f$, by summing  over powers of $s$, it is enough to show
that $[hfgdx]$ lies in $V^{>\al}H''_f$ for every $h\in\cO$. This now follows
from Proposition \ref{propVsm}
on the coincidence of the $V$-filtration and $V_w$-filtration on $H''_f$ in
the semi-weighted homogeneous case, and the assumption on $[gdx]$.
 \end{proof}

We now adjust the proof of Theorem \ref{prop2} from \ref{subPT2}. 

\medskip
\noindent
{\bf Proof of Theorem \ref{prop2}.} Using Corollary \ref{corVsm2} instead of Corollary \ref{corVsm}, the theorem is proved for the case when $l(\beta)\neq 1$. 

Suppose now $l(\beta)=1$. By Theorem \ref{prop1}, it is enough to show that $-1$ is a root of multiplicity at least 2 of $b_{f,g}(s)$. Let write $f_d=\sum_\gamma c_\gamma x^\gamma$ where $x^\gamma$ are monomials of weighted degree $d$, that is, $\sum_i w_i\gamma_i/d=1$ for each $\gamma\in\bN^{n+1}$ and $c_\gamma$ are finitely many non-zero complex numbers. Then $1=l(\beta)=\sum_iw_i(\beta_i+1)/d$ implies that $\gamma_i=\beta_i+1$ for each $\gamma$. Hence up to a non-zero constant factor, $f_d=x^\gamma$ is monomial. Since $f_d$ has isolated singularities, this can only happen if $n=1$, $f_d=x_0x_1$, and $g=1$. We know that $b_{f,g}(s)=b_f(s)$ is divisible by the local $b$-function of $f$ at the origin. The latter equals the local $b$-function of $f_d$ at the origin since it is an local analytic invariant and we can choose a new set of analytic coordinates $\tilde x_0,\tilde x_1\in\bC\{x_0,x_1\}$ such that $f=\tilde x_0\tilde x_1$. The claim then follows from $b_{x_0x_1}(s)=(s+1)^2$. This finishes the proof of Theorem \ref{prop2}. $\hfill\Box$

Note that Theorem \ref{prop2} implies Theorem \ref{thmA}. The mistake pointed above does not really affect the proof of Theorem \ref{thmA} in \ref{subPTA}: one knows (without invoking the problematic Corollary \ref{corVsm} or the new Corollary \ref{corVsm2}) that $(s+1)(s+|w|/d)$ divides the local $b$-function of $f$ by the constancy of the minimal exponent in $\mu$-constant deformations and by the computation of $b$-functions of weighted homogeneous isolated hypersurfaces.

\end{document}